\newtheorem{theorem}{Theorem}
\newtheorem{proposition}{Proposition}[section]
\newtheorem{lemma}[proposition]{Lemma}
\newtheorem{corollary}[proposition]{Corollary}
\newtheorem{remark}[proposition]{Remark}
\newtheorem{definition}[proposition]{Definition}
\newcommand{\R}{\mathbb{R}}
\newcommand{\C}{\mathbb{C}}
\newcommand{\Q}{\mathbb{H}}
\newcommand{\PQ}{\mathbf{P}_{\mathbb{H}}}
\newcommand{\HC}{\mathbf{H}_{\mathbb{C}}}
\newcommand{\HQ}{\mathbf{H}_{\mathbb{H}}}
\newcommand{\re}{\mathrm{Re}}
\newcommand{\im}{\mathrm{Im}}
\newcommand{\Shp}{\mathbb{S}}
\newcommand{\lp}{\langle}
\newcommand{\rp}{\rangle}
\newcommand{\tr}{\mathrm{tr}}
\newcommand{\qui}[1]{\phi({#1})}
\newcommand{\e}{\mathrm{e}}
\begin{document}
\title{Isometries of the quaternionic hyperbolic line}
\author[J. L. O. Chamorro]{Jaime L. O. Chamorro}
\address{Department of Matemathics, Federal University of Bahia, Brazil}
\email{jaime.chamorro@ufba.br}
\date{August 26, 2021}
\keywords{Hyperbolic Geometry,~~Eigenvalues}
\subjclass[2020]{51M10,~~15A16}
\begin{abstract}
We give a classification of the matrices in the unitary group $U(1,1;\Q)$, where $\Q$ is the division ring of the real quaternions. To this end, we consider the complex representation $\qui{P}$ for $P\in U(1,1;\Q)$. Next, we compute the characteristic polynomial $f$ of the $4\times 4$ complex matrix $\qui{P}$ and then study the sign of the resultant of $f$ and its derivative $f'$. 
\end{abstract}
\maketitle
\section{Introduction}\label{sec1}  
The general linear group $GL(2,\Q)$ of all $2\times 2$ non-singular quaternionic matrices acts on the quaternionic projective line $\PQ^1$ by right colineations, inducing an isometric action of the unitary group $U(1,1;\Q)\subset GL(2,\Q)$ on the projective model of the quaternionic hyperbolic line $\HQ^1\subset \PQ^1$. On  \emph{ball model} or \emph{Siegel domain} of $\HQ^1$, such action is given by \emph{M\"obius trasformations} 
\begin{equation}\label{eq0}
g(x)=(ax+b)(cx+d)^{-1},\text{ for all }x\in\HQ^1,
\end{equation}
with $\begin{bmatrix}a&b\\c&d\end{bmatrix}\in U(1,1;\Q)$.

Matrices in $U(1,1;\Q)$ were classified in \cite{cao-parker-wang} by studying the fixed points of (\ref{eq0}). For that purpose, in \cite{cao-parker-wang} all solutions of the quaternionic polynomial equation $xcx+xd-ax-b=0$ are determined by solving  $t^2-(d+\bar{c}\,^{-1}b\bar{d})t+\bar{c}\,^{-1}b=0$, with $t=cx+d$.

In this work, we present an alternative approach to the classification of matrices $P\in U(1,1;\Q)$ by studing the eigenvalues of its complex representation $\qui{P}$ (see \cite {zhang, lee}), which is a $4\times 4$ complex matrix whose characteristic polynomial has the form $t^4-2\tau t^3+\rho t^2-2\tau t+1$, where $\tau$ and $\rho$ are real numbers. This approach has been inspired by \cite[section 3.5]{parker}. The main goal of the paper is the following
\begin{theorem}\label{teoA}
For any $P=\begin{bmatrix}a&b\\c&d\end{bmatrix}\in U(1,1;\Q)$, 
let  $\tau=\re(a+d)$ and $\rho=2+|c-\overline{b}|^2+4\re(a)\re(d)$. If $R_1$ and $R_2$ are the regions in the $(\tau,\rho)$-plane, defined by $4|\tau|-2\leqq \rho\leqq\tau^2+2$ and $\rho\geqq\tau^2+2$,  respectively. Then 
for $P\neq\pm I$ we have that
\begin{enumerate}
\renewcommand{\labelenumi}{(\alph{enumi})}
\item $P$ is elliptic if and only if either 
$$(\tau,\rho)\in R_1\setminus R_1\cap R_2$$ 
or 
$$(\tau,\rho)\in R_1\cap R_2\setminus \{(\pm2,6)\}\text{, }\re(a)=\re(d)\text{ and }c=\bar{b}.$$
\item $P$ is parabolic if and only if either 
$$(\tau,\rho)=(\pm2,6)$$
or 
$$(\tau,\rho)\in R_1\cap R_2\setminus \{(\pm2,6)\}\text{ and either }\re(a)\neq\re(d)\text{ or }c\neq\bar{b}.$$
\item $P$ is loxodromic if and only if $(\tau,\rho)\in R_2\setminus R_1\cap R_2$.
\end{enumerate}
\end{theorem}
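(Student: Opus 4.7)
The plan is to classify $P$ from the eigenvalue configuration of its complex representation $\qui{P}$, and to read off the three dynamical types from the combinatorial regions $R_1, R_2$ that separate those configurations. I would begin by writing $\qui{P}$ explicitly: each quaternionic entry $q = z + jw$ is replaced by its $2 \times 2$ complex block, giving a $4 \times 4$ complex matrix. A direct expansion, simplified by the $Sp(1,1)$ relations $|a|^2 - |b|^2 = |d|^2 - |c|^2 = 1$ and $\overline{a}c = \overline{b}d$ (which incidentally force $|b| = |c|$ and hence the useful identity $|c - \overline{b}|^2 = 2|b|^2 - 2\re(cb)$), verifies that the characteristic polynomial of $\qui{P}$ is indeed $f(t) = t^4 - 2\tau t^3 + \rho t^2 - 2\tau t + 1$.

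Because $f$ is palindromic, the substitution $u = t + t^{-1}$ reduces $f(t)/t^2 = 0$ to the quadratic $g(u) = u^2 - 2\tau u + (\rho - 2)$. Each real root $u_i \in [-2, 2]$ of $g$ lifts to a conjugate pair of unimodular roots of $f$; any other $u_i$ produces roots of $f$ off the unit circle. Elementary sign analysis of the discriminant $4(\tau^2 + 2 - \rho)$ of $g$ and of the values $g(\pm 2) = \rho \mp 4\tau + 2$ shows that all four eigenvalues of $\qui{P}$ are unimodular iff $(\tau, \rho) \in R_1$; the parabola $\rho = \tau^2 + 2$ is where $g$ has a double root, the lines $\rho = \pm 4\tau - 2$ are where $g$ has a root equal to $\pm 2$, and the corners $(\pm 2, 6)$ are where these loci meet. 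These are precisely the factors of the resultant of $f$ with $f'$. From this picture the generic cases are immediate: in the interior of $R_1$ the four roots of $f$ are distinct unimodular numbers, so $\qui{P}$ is diagonalisable and $P$ is elliptic, while on $R_2 \setminus (R_1 \cap R_2)$ the roots of $g$ are complex conjugate, $f$ acquires roots off the unit circle, and $P$ is loxodromic.

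The nontrivial work happens on the parabola $\rho = \tau^2 + 2$ with $|\tau| < 2$, where $f$ has double eigenvalues on the unit circle. Diagonalisability of $\qui{P}$ is then equivalent to the minimal polynomial dividing $t^2 - \tau t + 1$; since $\chi$ is an injective real algebra homomorphism this in turn translates to the quaternionic matrix identity $P^2 - \tau P + I = 0$. Expanding this equation block-by-block, using the rewriting $|c - \overline{b}|^2 = (\re(a)-\re(d))^2$ of the parabola equation and the $Sp(1,1)$ relations, should collapse the system to the two scalar conditions $\re(a) = \re(d)$ and $c = \overline{b}$, producing the elliptic/parabolic dichotomy of (a) and (b). At the corners $(\pm 2, 6)$ one has $f(t) = (t \mp 1)^4$, so $\qui{P} \mp I$ is nilpotent; since $P \neq \pm I$ forces $\qui{P} \neq \pm I$, the matrix is not diagonalisable and $P$ is parabolic. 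The main obstacle I expect is precisely the expansion of $P^2 - \tau P + I = 0$ on the parabola: one must check carefully that the four block-entries of that equation, after use of the $Sp(1,1)$ relations, do not impose anything beyond $\re(a)=\re(d)$ and $c=\overline{b}$, and also verify the subsidiary fact that for $P \in Sp(1,1)$ the region $\rho < 4|\tau|-2$ is empty, so that the three cases (a)--(c) of the theorem exhaust the classification.
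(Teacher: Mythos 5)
Your reduction of $f$ via $u=t+t^{-1}$ to $g(u)=u^2-2\tau u+(\rho-2)$ is a legitimate variant of the paper's route, but the proposal has two genuine gaps. First, the asserted equivalence ``all four eigenvalues of $\qui{P}$ are unimodular iff $(\tau,\rho)\in R_1$'' is false as stated: both roots of $g$ lie in $[-2,2]$ only if, in addition to $g(\pm2)\geqq0$ and nonnegative discriminant, the vertex satisfies $|\tau|\leqq2$. For example $(\tau,\rho)=(3,10)$ satisfies $4|\tau|-2\leqq\rho\leqq\tau^2+2$ yet $g$ has the root $u=4$, which lifts to non-unimodular $t$. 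Worse, such configurations are actually realized: a real loxodromic element such as $\begin{bmatrix}\cosh s&\sinh s\\ \sinh s&\cosh s\end{bmatrix}$ has $(\tau,\rho)=(2\cosh s,4\cosh^2 s+2)$ on the parabola with $|\tau|>2$, and it satisfies $\re(a)=\re(d)$ and $c=\bar b$, so your sign analysis alone would misclassify it as elliptic. The paper avoids this by first pinning down exactly which eigenvalue configurations occur in $Sp(1,1)$ (Lemma \ref{lema1}(ii): $\lambda$ is an eigenvalue iff $\bar\lambda^{-1}$ is, together with $\det\qui{P}=1$, yielding the seven cases of Corollary \ref{cor1} and then Corollary \ref{cor2}); you need this input, or an explicit restriction to $|\tau|\leqq2$, to make the region dichotomy correct.

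Second, and more fundamentally, you never bridge the eigenvalue configuration to the fixed-point classification that actually defines elliptic/parabolic/loxodromic (Definition \ref{def2}). ``Four distinct unimodular eigenvalues, hence diagonalisable, hence elliptic'' is not automatic: one must show the corresponding right eigenvectors can be chosen orthogonal (Lemma \ref{lema2}(ii)), so that by Lemma \ref{lema3} one of them is negative and gives a fixed point in $\HQ^1$; for non-unimodular eigenvalues one must show the eigenvectors are isotropic (Lemma \ref{lema2}(i)), giving exactly two boundary fixed points; and in the non-diagonalisable case one must verify the unique eigenline is isotropic. The case $\lambda_1=\lambda_2=\e^{i\theta}$ with $P$ diagonalisable is genuinely delicate, since $\bar\lambda_1\lambda_2=1$ and the orthogonality lemma does not apply directly; the paper handles it via a $\C$-hyperbolic subspace argument in Section \ref{elliptic}. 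These fixed-point arguments are the substance of the theorem and cannot be omitted. On the final step your plan is sound but can be streamlined: rather than expanding $P^2-\tau P+I=0$ block-by-block, rewrite it as $P+P^{-1}=2\re(\lambda)I$ and use $P^{-1}=\mathbb{J}P^{\ast}\mathbb{J}$, which yields $\re(a)=\re(d)=\re(\lambda)$ and $c=\bar b$ immediately, as in the paper.
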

This paper is organized as follows. In Section \ref{sec2}, we briefly review some basic facts on quaternions, hyperbolic quaternionic spaces and right eigenvalues of square quaternionic matrices. In Section \ref{sec3}, we prove Corollary \ref{cor1} which provides the classification of substantial right eigenvalues of matrices in $U(1,1;\Q)$. In particular, we give a geometric description of elliptic, loxodromic and parabolic isometries of the quaternionic hyperbolic line. Then, in Section \ref{sec4}, we give a proof of Theorem \ref{teoA}, by using Corollary \ref{cor1} to study parameters $\tau$ and $\rho$ in terms of substantial right eigenvalues of matrices in $U(1,1;\Q)$. Moreover, in Theorem \ref{teoB}, we give explicit expressions for the substancial right eigenvalues in terms of $\tau$ and $\rho$.
\section{Preliminares}\label{sec2}
\subsection{Quaternions}\label{sec21}
Recall that, the the division ring of the real quaternions $\Q$ is generated by symbols $i$, $j$, $k$ satisfying 
\[i^2=j^2=k^2=ijk=-1.\]
So, given $q\in\Q$, we have $q=q_0+q_1i+q_2j+q_3k$ for some $q_0$, $q_1$, $q_2$, $q_3\in\R$. Analogously to complex numbers, \emph{conjugate}, \emph{norm}, \emph{real} and \emph{imaginary parts}  of a quatenion $q$ are defined respectively by
\[\bar{q}=q_0-q_1i-q_2j-q_3k\text{, ~}|q|=\sqrt{q\bar{q}}\text{, ~}\re(q)=\dfrac{q+\bar{q}}{2}\text{~ and ~} \im(q)=\dfrac{q-\bar{q}}{2}.\] 
Also, recall that the norm is multiplicative, $q^{-1}=\frac{\bar{q}}{|q|^2}$ if $q\neq0$, and the center of $\Q$ is the field of real numbers $\R$. Moreover, a quaternion is called \emph{imaginary} if its real part is zero, and any real subalgebra of $\Q$ which is generated by a non zero imaginary quaternion is isomorphic to the field of complex numbers $\C$. Hence, we identify, as usual, $\C$ with the real subalgebra generate by $i$.

We say that $p$, $q\in\Q$ are \emph{similar} if there exists non zero $\lambda\in\Q$ such that $p=\lambda^{-1} q\lambda$. Clearly, similarity is an equivalence relation. In fact, two quaternions are similar if and only if its real parts and norms are equal \cite[lemma 1.2.2]{chen-greenberg}. Hence, any $q\in\Q$ is similar to  $\re(q)\pm|\im(q)|i\in\C$. Then for $q\in\mathbb{R}$, $q$ is similar to $p$ if and only if $q=p$ and the similarity class of $q\not\in\mathbb{R}$ contains only one complex number with positive imaginary part, since $\bar{z}=-jzj$, for all $z\in\C$.
\subsection{Quaternionic Hyperbolic Spaces}\label{sec22}
Let $\Q^{n+1}$ be the right vector space on $\Q$ of all column matrices with $n+1$ elements in $\Q$. We consider here a non-degenerate Hermitian bilinear form $\lp\cdot,\cdot\rp$ of signature $(1,\dots,1,-1)$ on $\Q^{n+1}$, and denoted by $\Q^{n,1}$ the Hermitian space $(\Q^{n+1},\lp\cdot,\cdot\rp)$. 
\begin{definition}\label{def1}
A non-zero vector $X\in\Q^{n,1}$ is called \emph{negative}, \emph{isotropic} or \emph{positive} if $\lp X,X\rp$ is negative, zero or positive, respectively. The sets of all negative, isotropic and positive vectors are denoted respectively by $V_-$, $V_0$ and $V_+$.
\end{definition}
For all non-zero $X\in\Q^{n+1}$, let $X\Q$ be the one-dimensional right vector subspace on $\Q$ generated by $X$. The set $\PQ^n$ of all $X\Q$ is the $\Q$-projective space. Negative, isotropic and positive points $X\Q\in\PQ^n$ are given by negative, isotropic and positive vectors $X$, respectively. In this way,  the \emph{projective model} of the \emph{quaternionic hyperbolic space} $\HQ^n$ is the set of all negative points in $\PQ^n$. Also, its \emph{ideal boundary} which is denoted by $\partial\HQ^n$, is given by the set of all isotropic points in $\PQ^n$. 

The unitary group $U(n,1;\Q)$ is the subgroup of all matrices $P\in M_{n+1}(\Q)$ such that 
\begin{equation}\label{eq1}
\lp PX,PY\rp=\lp X,Y\rp,~\text{ for all }~X,Y\in\Q\,^{n,1}
\end{equation}

Given a non-singular matrix $P\in M_{n+1}(\Q)$, the \emph{collineation} induced by $P$ is the (real analytic) diffeomorphism $\tilde{P}$ of $\PQ^n$ given by $X\Q\mapsto (PX)\Q$. Clearly, collineations induced by elements of $U(n,1;\Q)$ leave $\HQ^n$, $\partial\HQ^n$ and $\PQ^n-(\HQ^n\cup\partial\HQ^n)$ invariant. In fact, $U(n,1;\Q)$ acts transitively on $\HQ^n$ by isometries with kernel $\{\pm I\}$. So, $\mathbf{P}U(n,1;\Q):=U(n,1;\Q)/\{\pm I\}$ acts isometricaly and effectively on $\HQ^n$ \cite[section 2.2]{chen-greenberg}. 

Let $\Pi$ be an affine right hyperplane of $\Q^{n,1}$ which is orthogonal to some negative vector. If $\Pi\cap V_-\neq\emptyset$, we have the (real analytic) diffeomorphism $\HQ^n\to\Pi\cap V_-$, $X\Q\mapsto X\Q\cap\Pi$ which extends naturally to a homeomorphism $\HQ^n\cup\partial\HQ^n\to\Pi\cap(V_-\cup V_0)$. In fact, $\Pi\cap V_-$ is homeomorphic to an open ball in $\Pi\simeq\R^{4n}$, with boundary $\Pi\cap V_0\simeq\Shp^{4n-1}$, which is called the \emph{ball model} for $\HQ^n$. It follows from Brouwer's fixed-point theorem that any collineation has a fixed point in $\HQ^n\cup\partial\HQ^n$. In this sence we have the following
\begin{definition}\label{def2}
We say that $P\in U(n,1;\Q)$ (resp. $\tilde{P}$) is 
\begin{enumerate}
\item \emph{elliptic} if $\tilde{P}$ has a fixed point in $\HQ^n$,
\item \emph{parabolic} if $\tilde{P}$ has exactly one fixed point in $\HQ^n\cup\partial\HQ^n$ which lies on $\partial\HQ^n$,
\item \emph{loxodromic} if $\tilde{P}$ has exactly two fixed points $\HQ^n\cup\partial\HQ^n$ which belong to $\partial\HQ^n$.
\end{enumerate}
\end{definition}
In fact, Definition \ref{def2} covers all possibilities \cite[section 3.1]{chen-greenberg}.

An unbounded model for $\HQ^n$ can be constructed in a similar way to the ball model. Namely, we choose $\Pi$ to be ortogonal to some isotropic vector $X_0$. So $\Pi\cap V_-$ is a connected open unbounded set of $\Pi$ called \emph{Siegel domain}. Here, $\partial\HQ^n$ is identified with $(\Pi\cap V_0)\cup\{\infty\}$, where $\infty$ is an ideal point corresponding to $X_0\Q$. 

Finally, recall that $\HQ^1$ is called \emph{quaternionic hyperbolic line}.
\subsection{Right Eigenvalues of Quaternionic Matrices}\label{sec23}
For quaternionic matrices we have two essentially different notions of eigenvalues, namely on the left and on the right. For our purposes, we need only the second one.
\begin{definition}\label{def3} 
Given a square quaternionic matrix $P$, we say that $\lambda\in \Q$ is a \emph{right eigenvalue} of $P$ if there exists a non zero column vector $X$, such that
\begin{equation}\label{eq2}
PX=X\lambda.
\end{equation}
In this case, we will refer to $X$ as a \emph{right eigenvector} of $P$ associated to $\lambda$  
\end{definition}
On the one hand, we have that the sum of right eigenvectors is again a right eigenvector, all of them associated to the same eigenvalue. On the other hand,
a right scalar multiple of a right eigenvector is again a right eigenvector, but these are associated to similar, not necessarilly equal, eigenvalues.
In fact, it follows from Equation (\ref{eq2}) that, if $\mu\in\Q$ with $\mu\neq0$, then,
\[P(X\mu)=(X\mu)\mu^{-1}\lambda\mu.\] 
In particular, any quaternion similar to a right eigenvalue is also a right eigenvalue. So, if there exists a right eigenvalue, then necessarilly there exists at least one complex right eigenvalue. 

To find complex right eigenvalues of a square quaternionic matrix we need to asssociate	it with a certain complex matrix. In fact, note that for any $p\in\Q$ there exist unique $a$, $b\in\C$ such that $p=a+bj$. In particular, given a quaternionic matrix $P$, by applying above decomposition to each of its elements, we get unique complex matrices $A$ and $B$, such that  $P=A+Bj$. We say that 
\[\qui{P}:=\left[\begin{array}{rc}A&B\\-\bar{B}&\bar{A}\end{array}\right]\]
is the \emph{complex representation} of $P$. We will see below, in addition to other properties that, any eigenvalue of $\qui{P}$ is also a right eigenvalue of $P$. 

The next four propositions, have been proved in \cite[Sec. 4, 5, 6]{zhang} and \cite{lee}.
\begin{proposition}\label{prop1}
We have that $\chi:M_{m}(\Q)\to M_{2m}(\C)$, $P\mapsto\qui{P}$, is an injective homomorphism of real algebras such that
\begin{enumerate}
\item $\qui{P^{\ast}}=\qui{P}^{\ast}$;
\item $\det(\qui{P})\geqq0$.
\end{enumerate}
\end{proposition}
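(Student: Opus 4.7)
The plan is to handle the algebra-homomorphism structure and (1) by direct block calculation in the decomposition $P = A + Bj$ with $A,B\in M_m(\C)$, isolating (2) as the one statement that requires a structural argument: reality of the determinant plus a connectedness input. Throughout, the essential identity is $jz = \bar z\,j$ for $z\in\C$.

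For multiplicativity, write $P = A+Bj$, $Q = C+Dj$ with $A,B,C,D\in M_m(\C)$. Expanding $PQ$ and moving $j$'s past complex matrices using $jz=\bar zj$ gives $PQ = (AC - B\bar D) + (AD + B\bar C)j$. A $2\times 2$ block multiplication of $\qui{P}$ and $\qui{Q}$ recovers exactly these entries in the top row and their complex conjugates in the bottom, so $\qui{PQ} = \qui{P}\,\qui{Q}$; $\R$-linearity and injectivity are immediate. For (1), the entrywise conjugation rule $\overline{a+bj}=\bar a - bj$ (for $a,b\in\C$) together with the transpose yields $P^{\ast} = \bar A^{T} - B^{T}j$, so
\[
\qui{P^{\ast}} = \begin{pmatrix} \bar A^{T} & -B^{T} \\ \bar B^{T} & A^{T} \end{pmatrix},
\]
which matches the complex conjugate-transpose of $\qui{P}$ by inspection.

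For (2), I would first derive reality of $\det \qui{P}$ from a similarity with its complex conjugate. Setting $J = \begin{pmatrix} 0 & -I \\ I & 0 \end{pmatrix} \in M_{2m}(\C)$, a direct block computation verifies $J\,\overline{\qui{P}}\,J^{-1} = \qui{P}$; taking determinants yields $\overline{\det\qui{P}} = \det\qui{P}$, hence $\det\qui{P}\in\R$. To promote this to non-negativity I would use connectedness. If $P$ is singular, then any nonzero $v\in\ker P$ produces a nonzero element of $\ker\qui{P}$ via the intertwining map $\Q^m\to\C^{2m}$, $y+zj\mapsto(y,-\bar z)^{T}$ (with $y,z\in\C^m$), so $\det\qui{P}=0$. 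If $P$ is invertible, $P\mapsto\det\qui{P}$ is a continuous real-valued function on $GL_m(\Q)$ that is nowhere zero and equals $1$ at $I$; since $GL_m(\Q)$ is connected (via the quaternionic polar decomposition $P = UR$ with $U$ quaternionic unitary and $R$ in the convex cone of Hermitian positive-definite matrices, together with the compactness and connectedness of the quaternionic unitary group), the sign is forced to be positive.

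The main obstacle is precisely the non-negativity in (2): reality drops out immediately from the similarity $J\,\overline{\qui{P}}\,J^{-1} = \qui{P}$, but pinning down the sign requires the connectedness input on $GL_m(\Q)$ (equivalently, that $\det\qui{U}=1$ for any quaternionic unitary $U$, which itself reflects the connectedness of the quaternionic unitary group). Every other step reduces to routine book-keeping in the $A+Bj$ decomposition.
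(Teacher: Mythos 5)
Your proof is correct, but note that the paper does not prove Proposition \ref{prop1} at all: it is stated with an attribution to Zhang and Lee, so there is no in-paper argument to match. Your block computations for multiplicativity and for $\qui{P^{\ast}}=\qui{P}^{\ast}$ are exactly the routine verifications one would expect, and the two-step treatment of $\det\qui{P}\geqq 0$ is sound: the similarity $J\,\overline{\qui{P}}\,J^{-1}=\qui{P}$ with $J=\left[\begin{smallmatrix}0&-I\\ I&0\end{smallmatrix}\right]$ does give reality of the determinant, the intertwining map $y+zj\mapsto (y,-\bar z)^{T}$ correctly disposes of the singular case, and connectedness of $GL_m(\Q)$ (via quaternionic polar decomposition) pins the sign on the invertible locus. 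It is worth contrasting this with the route taken in Zhang's paper (the source cited here), which deduces non-negativity from the same similarity $J\,\overline{\qui{P}}\,J^{-1}=\qui{P}$ by showing the eigenvalues of $\qui{P}$ occur in conjugate pairs with equal multiplicities (essentially the content of Proposition \ref{prop3}), whence $\det\qui{P}=\prod_{\alpha}|\lambda_{\alpha}|^2\geqq 0$. That argument is more algebraic but requires justifying that real eigenvalues have even multiplicity (which follows from $J$ being a quaternionic structure, $\bar J J=-I$, so eigenspaces are quaternionic subspaces); your connectedness argument sidesteps that multiplicity issue entirely at the cost of a topological input, and both are legitimate.
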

\begin{proposition}\label{prop2}
Given $P\in M_m(\Q)$, we have that, $\lambda\in\C$ is a right eigenvalue of $P$ if and only if $\lambda$ is an eigenvalue of $\qui{P}$. In particular, there exists at least one and at most $m$ distinct similarity classes of right eigenvalues for $P$. 
\end{proposition}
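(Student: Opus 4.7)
My plan is to unpack the right-eigenvalue equation via the decomposition $P=A+Bj$ and $X=X_1+X_2j$ with $A,B\in M_m(\C)$ and $X_1,X_2\in\C^m$. Using the commutation relation $ja=\bar{a}j$ for $a\in\C$, expansion yields
\[
PX=(AX_1-B\bar{X_2})+(AX_2+B\bar{X_1})j,\qquad X\lambda=X_1\lambda+X_2\bar{\lambda}j,
\]
so $PX=X\lambda$ splits into the complex system $AX_1-B\bar{X_2}=\lambda X_1$ and $AX_2+B\bar{X_1}=X_2\bar{\lambda}$. Conjugating the second equation, negating, and stacking it beneath the first turns this system into precisely
\[
\qui{P}\begin{bmatrix}X_1\\-\bar{X_2}\end{bmatrix}=\lambda\begin{bmatrix}X_1\\-\bar{X_2}\end{bmatrix}.
\]
Since the assignment $X_1+X_2 j\mapsto(X_1,-\bar{X_2})^{t}$ is an $\R$-linear bijection $\Q^m\to\C^{2m}$ that preserves nonzeroness, both directions of the claimed equivalence follow at once. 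In particular, the fundamental theorem of algebra applied to the $2m\times 2m$ complex matrix $\qui{P}$ produces at least one eigenvalue $\lambda\in\C$, and hence at least one similarity class of right eigenvalues of $P$.

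For the upper bound, recall from Section~\ref{sec21} that every similarity class of quaternions contains a unique complex representative with nonnegative imaginary part, so the count coincides with the number of distinct eigenvalues of $\qui{P}$ in the closed upper half plane. Setting $J=\begin{bmatrix}0&I\\-I&0\end{bmatrix}$, a short block computation gives $J\,\overline{\qui{P}}\,J^{-1}=\qui{P}$, so the characteristic polynomial of $\qui{P}$ has real coefficients and its non-real roots occur in complex-conjugate pairs of equal multiplicity. Moreover, the antilinear involution $T(v):=J\bar{v}$ satisfies $T^{2}=-I$ and commutes with $\qui{P}$, so for each real eigenvalue $\lambda$ the generalized eigenspace $\ker((\qui{P}-\lambda I)^{2m})$ is $T$-invariant and, via $i\mapsto i\cdot$ and $j\mapsto T$, carries the structure of a left $\Q$-module, forcing its complex dimension to be even. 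Writing the $2m$ eigenvalues of $\qui{P}$ with multiplicity as $r$ real ones of even multiplicities $2a_1,\ldots,2a_r$ together with $c$ non-real conjugate pairs of multiplicities $b_1,b_1,\ldots,b_c,b_c$, we obtain $\sum a_i+\sum b_j=m$ and therefore $r+c\leqq m$.

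The main obstacle is precisely the evenness of the multiplicities at real eigenvalues; without it, the conjugate-pair symmetry alone yields only the weaker bound of $2m$ distinct eigenvalues in the upper half plane, and it is the quaternionic module structure provided by the commuting antilinear $T$ that halves this count to give the sharp bound $m$.
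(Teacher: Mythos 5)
Your proof is correct and complete; note that the paper itself offers no proof of this proposition, deferring to Zhang and Lee, so any comparison is with the cited literature rather than with an argument in the text. Your first half — unpacking $PX=X\lambda$ through $P=A+Bj$, $X=X_1+X_2j$ and the relation $ja=\bar a j$ to get $\qui{P}(X_1,-\bar{X_2})^{t}=\lambda(X_1,-\bar{X_2})^{t}$ — is exactly the computation the paper implicitly relies on in the proof of Corollary \ref{newcor}, where $(X_1,-\bar{X_2})^{t}$ appears as the first column of $\qui{X}$, so your conventions match the paper's. The genuinely nontrivial part of the upper bound is, as you say, the even algebraic multiplicity of real eigenvalues of $\qui{P}$: the conjugation symmetry $J\overline{\qui{P}}J^{-1}=\qui{P}$ alone only gives real coefficients, and your use of the commuting antilinear $T(v)=J\bar v$ with $T^{2}=-I$ to put a quaternionic module structure on each generalized eigenspace of a real eigenvalue is the standard and correct way to close that gap. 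In doing so you have in effect also proved the paper's Proposition \ref{prop3} (the pairing of the characteristic polynomial into factors $(t-\lambda_\alpha)(t-\overline{\lambda_\alpha})$), which the paper likewise only cites; so your argument is more self-contained than the paper at the cost of being longer than a citation.
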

\begin{proposition}\label{prop3}
If $P\in M_m(\Q)$, then characteristic polynomial of its complex representation $\qui{P}$ has the form
\[(t-\lambda_1)(t-\overline{\lambda_1})(t-\lambda_2)(t-\overline{\lambda_2})\cdots(t-\lambda_{m})(t-\overline{\lambda_{m}}),\]
where either $\lambda_{\alpha}\in\R$ or $\lambda_{\alpha}$ has positive imaginary part, for all $\alpha=1,2,\dots,m$. 
\end{proposition}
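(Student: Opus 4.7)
The plan is to exploit a quaternionic anti-involution on $\C^{2m}$ that commutes with $\qui P$. Set $J=\begin{bmatrix}0&I_m\\-I_m&0\end{bmatrix}\in M_{2m}(\R)$ and define the conjugate-linear operator $\tau\colon\C^{2m}\to\C^{2m}$ by $\tau(v)=J\bar v$. A short block computation starting from $\qui P=\begin{bmatrix}A&B\\-\bar B&\bar A\end{bmatrix}$ verifies the conjugation identity $J\qui P J^{-1}=\overline{\qui P}$, which rewrites as $\tau\qui P=\qui P\tau$. Since $J$ has real entries and $J^2=-I$, it also follows immediately that $\tau^2=-I$ and $\tau(iv)=-i\tau(v)$ for every $v\in\C^{2m}$.

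From the similarity $\qui P\sim\overline{\qui P}$ one deduces that the characteristic polynomial $f(t):=\det(tI-\qui P)$ equals its own coefficient-wise conjugate, so $f\in\R[t]$. This already forces each non-real eigenvalue $\lambda$ and its complex conjugate $\overline\lambda$ to appear with the same algebraic multiplicity, giving the non-real part of the claimed factorization. The remaining step---and the essential one---is to show that every real eigenvalue has \emph{even} algebraic multiplicity.

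To handle this, I would fix a real eigenvalue $\lambda$ and consider the generalized eigenspace $W_\lambda:=\ker\bigl((\qui P-\lambda I)^{2m}\bigr)$. Because $\qui P$ commutes with $\tau$ and $\overline\lambda=\lambda$, $\tau$ maps $W_\lambda$ into itself. Together with complex scalar multiplication, the relations $\tau^2=-I$ and $i\tau=-\tau i$ endow $W_\lambda$ with the structure of a left $\Q$-module: letting $j$ act as $\tau$ and $k$ as $i\tau$, the quaternionic relations $i^2=j^2=k^2=ijk=-1$ are routine to check. Since every left module over the division algebra $\Q$ is free, $\dim_\R W_\lambda$ is a multiple of $4$, and therefore $\dim_\C W_\lambda$ is even.

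Finally, assembling the pieces, I would group the factors of $f(t)=\prod_\lambda(t-\lambda)^{d_\lambda}$ over the spectrum: each real $\lambda$ (of even multiplicity $d_\lambda$) contributes $d_\lambda/2$ factors $(t-\lambda)(t-\overline\lambda)=(t-\lambda)^2$, while each non-real $\lambda$ with positive imaginary part contributes $d_\lambda=d_{\overline\lambda}$ factors $(t-\lambda)(t-\overline\lambda)$; a direct count gives $m$ such pairs in total, matching the form in the statement. The main obstacle is the even-multiplicity step, since a generic polynomial with real coefficients may have real roots of odd multiplicity; it is therefore crucial to use not merely complex conjugation symmetry but the stronger quaternionic anti-involution $\tau^2=-I$.
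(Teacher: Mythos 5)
The paper does not actually prove Proposition \ref{prop3}; it defers to \cite{zhang} and \cite{lee}. Your argument is a correct, self-contained proof. The conjugation identity $J\qui{P}J^{-1}=\overline{\qui{P}}$ with $J=\left[\begin{smallmatrix}0&I_m\\-I_m&0\end{smallmatrix}\right]$ checks out by block multiplication, and you correctly isolate the one point where ``$f$ has real coefficients'' is not enough: a real polynomial can have real roots of odd multiplicity, so the quaternionic anti-involution $\tau(v)=J\bar v$, $\tau^2=-I$, is genuinely needed. Your treatment of that step is sound: since $\tau(\qui{P}-\lambda I)=(\qui{P}-\overline{\lambda}I)\tau$, the generalized eigenspace $W_\lambda$ of a real $\lambda$ is $\tau$-invariant, and the relations $\tau^2=-I$, $\tau i=-i\tau$ make $W_\lambda$ a left $\Q$-module, hence free over the division ring $\Q$, so $\dim_{\R}W_\lambda\in 4\mathbb{Z}$ and the algebraic multiplicity $\dim_{\C}W_\lambda$ is even. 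This is in the same spirit as the cited sources, which obtain the conjugate pairing of eigenvalues from the same map $v\mapsto J\bar v$ (sending $\lambda$-eigenvectors to $\overline{\lambda}$-eigenvectors) but settle multiplicities by passing through the quaternionic Jordan canonical form; your module-theoretic dimension count avoids canonical forms altogether, which is a modest but real simplification.
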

\begin{definition}\label{def5}
Complex numbers $\lambda_1,\dots,\lambda_m$ as in Propositon \ref{prop3} are called \emph{substantial right eigenvalues} of $P$.
\end{definition}
\begin{proposition}\label{prop4}
Given $P\in M_2(\Q)$, let $\lambda_1$ and $\lambda_2$ be its substantial right eigenvalues. We have that $\qui{P}$ is similar to $\begin{bmatrix}J&0\\0&\bar{J}\end{bmatrix}$, where either $J=\begin{bmatrix}\lambda_1&0\\0&\lambda_2\end{bmatrix}$ or $J=\begin{bmatrix}\lambda&0\\1&\lambda\end{bmatrix}$, with $\lambda=\lambda_1=\lambda_2$. Moreover, $P$ is similar to $J$ and hence $P$ is diagonalizable if and only if $\qui{P}$ is diagonalizable.
\end{proposition}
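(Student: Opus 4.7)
My plan is to use a built-in conjugate-linear symmetry of $\qui{P}$ on $\C^4$ to pin down its Jordan form, and then to lift a Jordan basis back to $\Q^2$ to produce the quaternionic similarity for $P$ itself.

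First, I set up the intertwining. Writing $P=A+Bj$ and $X=a+bj\in\Q^2$, a direct computation shows that the $\R$-linear bijection $\Psi\colon X\mapsto\bigl[\begin{smallmatrix}a\\-\bar b\end{smallmatrix}\bigr]$ from $\Q^2$ to $\C^4$ sends left multiplication by $P$ to $\qui{P}$, and sends right multiplication by $j$ to the conjugate-linear map $\sigma(v):=J_0\bar v$, where $J_0=\bigl[\begin{smallmatrix}0&I\\-I&0\end{smallmatrix}\bigr]$; in particular $\sigma^2=-I$ and $\sigma\,\qui{P}=\qui{P}\,\sigma$. Since $\Psi$ also sends right multiplication by $\lambda\in\C$ to scalar multiplication by $\lambda$, the relation $PX=X\lambda+X_0$ in $\Q^2$ translates into $(\qui{P}-\lambda I)\Psi(X)=\Psi(X_0)$ in $\C^4$, extending Proposition \ref{prop2} to Jordan chains.

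Next, I would determine the Jordan form of $\qui{P}$ from Proposition \ref{prop3} and this symmetry. The intertwining $\sigma\,\qui{P}=\qui{P}\,\sigma$ turns $\sigma$ into a conjugate-linear isomorphism $E_\lambda^{\mathrm{gen}}\to E_{\bar\lambda}^{\mathrm{gen}}$ between generalized eigenspaces that preserves Jordan type. When $\lambda\in\R$ it restricts to a quaternionic structure on $E_\lambda^{\mathrm{gen}}$, so each subquotient $\ker(\qui{P}-\lambda I)^k/\ker(\qui{P}-\lambda I)^{k-1}$ has even $\C$-dimension, and every Jordan block size at a real eigenvalue must occur with even multiplicity. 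Combining this parity constraint with Proposition \ref{prop3}, the only admissible Jordan shapes of $\qui{P}$ are either diagonal, similar to $\bigl[\begin{smallmatrix}J&0\\0&\bar J\end{smallmatrix}\bigr]$ with $J=\mathrm{diag}(\lambda_1,\lambda_2)$, or (only when $\lambda_1=\lambda_2=\lambda$ and $\qui{P}$ is not diagonalizable) a pair of size-$2$ cells at $\lambda$ and $\bar\lambda$, similar to $\bigl[\begin{smallmatrix}J&0\\0&\bar J\end{smallmatrix}\bigr]$ with $J=\bigl[\begin{smallmatrix}\lambda&0\\1&\lambda\end{smallmatrix}\bigr]$; the parity rules out shapes such as $(3,1)$ or a single $(4)$-cell at a real $\lambda$.

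For the \emph{Moreover} assertion I would construct explicit $\Q$-bases of $\Q^2$ realizing $P$ as $J$. If $\lambda_1\neq\lambda_2$, let $v_\alpha$ be eigenvectors of $\qui{P}$ at $\lambda_\alpha$ and set $X_\alpha:=\Psi^{-1}(v_\alpha)$; a dependence $X_2=X_1\nu$ would give $\lambda_1\nu=\nu\lambda_2$, making $\lambda_1$ and $\lambda_2$ similar in $\Q$ and hence equal as substantial eigenvalues, contradiction. If $\lambda_1=\lambda_2=\lambda$ and $\qui{P}$ is diagonalizable, then either $\lambda\in\R$ and $\qui{P}=\lambda I$, whence $P=\lambda I$ by injectivity of $\chi$ (Proposition \ref{prop1}), or $\lambda\notin\R$ and the same independence argument applied to a $\C$-basis of the $2$-dimensional eigenspace $E_\lambda$ produces two $\Q$-independent right eigenvectors. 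Finally, if $\qui{P}$ has a Jordan cell at $\lambda$, pick a chain $v_1,v_2$ with $(\qui{P}-\lambda I)v_2=v_1$ and set $X_k:=\Psi^{-1}(v_k)$, so $PX_1=X_1\lambda$ and $PX_2=X_2\lambda+X_1$; a dependence $X_2=X_1\nu$ would yield $\lambda\nu-\nu\lambda=1$, but expanding $\nu=n_0+n_1 j$ reduces this to $(\lambda-\bar\lambda)n_1 j=1$, impossible in $\Q$. With respect to the ordered $\Q$-basis $(X_2,X_1)$, $P$ is then represented by $J=\bigl[\begin{smallmatrix}\lambda&0\\1&\lambda\end{smallmatrix}\bigr]$. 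The last clause is immediate, since $\chi$ preserves similarity (Proposition \ref{prop1}) and $\qui{J}=\bigl[\begin{smallmatrix}J&0\\0&\bar J\end{smallmatrix}\bigr]$ is diagonal iff $J$ is.

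The main obstacle is the parity argument at real substantial eigenvalues: Proposition \ref{prop3} alone only constrains the characteristic polynomial and leaves room for forbidden Jordan shapes, and it is the quaternionic structure supplied by $\sigma$ on the real-eigenvalue generalized eigenspaces that rules these out and pins the normal form down to the two cases in the statement.
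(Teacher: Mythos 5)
Your argument is correct, but note that the paper does not prove Proposition \ref{prop4} at all: it is imported from Zhang and Lee (``The next four propositions, have been proved in \cite[Sec. 4, 5, 6]{zhang} and \cite{lee}''), so you have supplied a self-contained proof of a cited result rather than a variant of the paper's. Your intertwining map $\Psi$ and the conjugate-linear symmetry $\sigma$ check out ($\Psi(PX)=\qui{P}\Psi(X)$, $\Psi(Xj)=J_0\overline{\Psi(X)}$, $\sigma^2=-I$, $\sigma\qui{P}=\qui{P}\sigma$), and the parity argument at real eigenvalues is exactly the point that Proposition \ref{prop3} alone cannot deliver; it correctly excludes the shapes $(2,1,1)$, $(3,1)$ and $(4)$ and pins the Jordan form to the two cases in the statement. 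The machinery is also consonant with the paper's own style: the proof of Corollary \ref{newcor} uses the columns $\xi_\alpha$, $-\eta_\alpha$ of $\qui{X_\alpha}$, which is your $\Psi$ and $\sigma(\Psi)$ in different notation, and your independence computations (e.g.\ $\lambda\nu-\nu\lambda=1$ forcing $n_1(\lambda-\bar\lambda)j=1$, impossible) are sound. Two spots deserve one more line each: for $\lambda_1=\lambda_2=\lambda\notin\R$ with $\qui{P}$ diagonalizable, ``the same independence argument'' is not literally the same --- a relation $X_2=X_1\nu$ now gives $\lambda\nu=\nu\lambda$, which forces $\nu\in\C$ because $\lambda\notin\R$, and only then contradicts $\C$-independence of $v_1,v_2$; and the closing ``immediate'' claim that $P$ diagonalizable implies $\qui{P}$ diagonalizable needs the (easy) observation that $\qui{D}$ is diagonalizable for any diagonal quaternionic $D$, since each $2\times2$ block $\qui{d}$ has distinct eigenvalues $d,\bar d$ or is scalar. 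With those two sentences added, the proof is complete and arguably more transparent than the canonical-form arguments in the cited references.
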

We end this section by giving a sufficient condition for a $2\times 2$ quaternionic matrix to be diagonalizable.
\begin{corollary}\label{newcor}
Given $P\in M_2(\Q)$, let $\lambda_1$ and $\lambda_2$ be its substantial right eigenvalues. If $\lambda_1\neq\lambda_2$, then $P$ is diagonalizable.
\end{corollary}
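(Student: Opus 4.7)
The plan is to deduce the corollary directly from Proposition \ref{prop4}, which already does almost all of the work. Proposition \ref{prop4} gives a dichotomy for the conjugacy class of $\qui{P}$: either $\qui{P}$ is similar to a block-diagonal matrix whose upper block is $\mathrm{diag}(\lambda_1,\lambda_2)$, or $\qui{P}$ is similar to a block-diagonal matrix whose upper block is a nontrivial Jordan block with eigenvalue $\lambda = \lambda_1 = \lambda_2$.

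My first step would be to rule out the second alternative using the hypothesis $\lambda_1 \neq \lambda_2$. Since the Jordan block case explicitly requires the two substantial right eigenvalues to coincide, it is incompatible with our assumption. Hence only the first alternative can occur, so $\qui{P}$ is similar to $\mathrm{diag}(\lambda_1, \lambda_2, \overline{\lambda_1}, \overline{\lambda_2})$. In particular $\qui{P}$ is diagonalizable as a complex matrix.

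Next I would invoke the final clause of Proposition \ref{prop4}, which asserts that $P$ is diagonalizable (as a quaternionic matrix) if and only if $\qui{P}$ is diagonalizable (as a complex matrix). Combining this equivalence with the conclusion of the previous step yields that $P$ itself is diagonalizable, and in fact similar to $J = \mathrm{diag}(\lambda_1,\lambda_2)$.

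There is essentially no obstacle here: the whole argument is a one-line consequence of the structure theorem given by Proposition \ref{prop4}, once one notices that the Jordan-block alternative is forbidden by $\lambda_1 \neq \lambda_2$. The only thing worth being careful about is not to confuse the two possible normal forms of $\qui{P}$ and to correctly quote the ``if and only if'' statement linking the diagonalizability of $P$ and $\qui{P}$.
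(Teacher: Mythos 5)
Your argument is correct: Proposition \ref{prop4} as stated in the paper does assert that the non-diagonal alternative forces $\lambda_1=\lambda_2$, so under the hypothesis $\lambda_1\neq\lambda_2$ only the diagonal normal form $J=\mathrm{diag}(\lambda_1,\lambda_2)$ can occur, and the clause ``$P$ is similar to $J$'' (or, as you use it, the equivalence between diagonalizability of $P$ and of $\qui{P}$) immediately gives the conclusion. This is, however, a genuinely different route from the paper's. The paper does not invoke the Jordan-form dichotomy at all; instead it builds an explicit eigenbasis: for each $\lambda_\alpha$ it takes a right eigenvector $X_\alpha$ of $P$, observes that the columns $\xi_\alpha$, $-\eta_\alpha$ of $\qui{X_\alpha}$ are eigenvectors of $\qui{P}$ for $\lambda_\alpha$ and $\overline{\lambda_\alpha}$, notes that distinct substantial eigenvalues lie in distinct similarity classes so that $X_1$, $X_2$ are right linearly independent, and then transfers this to linear independence of $\{\xi_1,\eta_1,\xi_2,\eta_2\}$ via the identity $a_1\xi_1+b_1\eta_1+a_2\xi_2+b_2\eta_2=0\iff X_1(a_1+\overline{b_1}j)+X_2(a_2+\overline{b_2}j)=0$. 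Your proof is shorter and purely formal, but it leans entirely on the precise wording of the imported normal-form statement (in particular on the guarantee that the Jordan-block case only arises when the substantial eigenvalues coincide); the paper's proof is more self-contained and constructive, exhibiting the diagonalizing basis directly and making visible the mechanism (non-similar eigenvalues $\Rightarrow$ right independence of eigenvectors) that underlies the dichotomy. Both are valid; just be aware that if Proposition \ref{prop4} were stated only as ``$\qui{P}$ has a Jordan form'' without the clause $\lambda=\lambda_1=\lambda_2$, your one-line deduction would no longer suffice and something like the paper's explicit argument would be needed.
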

\begin{proof}
For $\alpha=1$, $2$, let $X_{\alpha}$ be a right eigenvector of $P$ associated to $\lambda_{\alpha}$. Consider $\qui{X_{\alpha}}$ and denote by $\xi_{\alpha}$ and $-\eta_{\alpha}$ its columns. Clearly, $\xi_{\alpha}$ and $\eta_{\alpha}$ are linearly independent eigenvectors of $\qui{P}$, associated to $\lambda_{\alpha}$ and $\overline{\lambda_{\alpha}}$, respectively. If $\lambda_1\neq\lambda_2$, then $\lambda_1$ and $\lambda_2$ are not similar, and hence $X_1$ and $X_2$ are right linearly independent. In particular, $\{\xi_1,\eta_1,\xi_2,\eta_2\}$ is linearly independent, since $a_1\xi_1+b_1\eta_1+a_2\xi_2+b_2\eta_2=0$ if and only if $X_1(a_1+\overline{b_1}j)+X_2(a_2+\overline{b_2}j)=0$.
\end{proof}
\section{The group $U(1,1;\Q)$}\label{sec3}
As usually, for $X$, $Y\in\Q^{1,1}$ (see Section \ref{sec22}) we put $\lp X,Y\rp=X^{\ast}\mathbb{J}Y$, where $\mathbb{J}=\left[\begin{array}{cr}1&0\\0&-1\end{array}\right]$. Thus, $P\in U(1,1;\Q)$ if and only if
\begin{equation}\label{eq3}
P^{\ast}\mathbb{J}P=\mathbb{J}.
\end{equation}
\subsection{Substantial Right Eigenvalues}
Now we will determine the substantial rigth eigenvalues of matrices in $U(1,1;\Q)$.
\begin{lemma}\label{lema1}
If $P\in U(1,1;\Q)$ then
\begin{enumerate}[(i)]
\item $\det(\qui{P})=1$,
\item $\lambda$ is an eigenvalue of $\qui{P}$ if and only if $\bar{\lambda}^{-1}$ is an eigenvalue of $\qui{P}$.
\end{enumerate}
\end{lemma}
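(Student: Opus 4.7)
The plan is to push the defining relation of $Sp(1,1)$, namely $P^{\ast}\mathbb{J}P=\mathbb{J}$, through the complex representation $\chi$ and then read off both statements from elementary facts about complex matrices. The key preliminary observation is that $\mathbb{J}=\mathrm{diag}(1,-1)$ has only real entries, so its decomposition as $A+Bj$ has $B=0$, and therefore $\qui{\mathbb{J}}=\mathrm{diag}(1,-1,1,-1)$; in particular $\qui{\mathbb{J}}$ is invertible with $\det\qui{\mathbb{J}}=1$.

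For part (i), I would apply $\chi$ to $P^{\ast}\mathbb{J}P=\mathbb{J}$. Since $\chi$ is an algebra homomorphism by Proposition \ref{prop1}, and since $\qui{P^{\ast}}=\qui{P}^{\ast}$ by the same proposition, this yields
\[
\qui{P}^{\ast}\,\qui{\mathbb{J}}\,\qui{P}=\qui{\mathbb{J}}.
\]
Taking determinants and using $\det\qui{P}^{\ast}=\overline{\det\qui{P}}$ gives $|\det\qui{P}|^{2}=1$. Combining with the inequality $\det\qui{P}\geqq 0$ from Proposition \ref{prop1}(2) forces $\det\qui{P}=1$.

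For part (ii), I would rewrite the above relation as
\[
\qui{P}^{-1}=\qui{\mathbb{J}}^{-1}\,\qui{P}^{\ast}\,\qui{\mathbb{J}},
\]
so that $\qui{P}^{-1}$ and $\qui{P}^{\ast}$ are similar complex matrices and therefore have the same spectrum. Since for any invertible complex matrix $A$ we have $\mathrm{spec}(A^{-1})=\{\lambda^{-1}:\lambda\in\mathrm{spec}(A)\}$ and $\mathrm{spec}(A^{\ast})=\{\overline{\lambda}:\lambda\in\mathrm{spec}(A)\}$, the equality of spectra translates into
\[
\{\lambda^{-1}:\lambda\in\mathrm{spec}(\qui{P})\}=\{\overline{\lambda}:\lambda\in\mathrm{spec}(\qui{P})\}.
\]
From this set equality, $\lambda\in\mathrm{spec}(\qui{P})$ iff $\lambda^{-1}=\overline{\mu}$ for some $\mu\in\mathrm{spec}(\qui{P})$, iff $\overline{\lambda}^{-1}=\mu\in\mathrm{spec}(\qui{P})$, which is exactly (ii). Note that $0\notin\mathrm{spec}(\qui{P})$ by (i), so all the inverses are legitimate.

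There is no real obstacle here; the only point that requires care is keeping straight the interplay between the two involutions (conjugate transpose on quaternionic matrices versus on complex matrices), and the identification of $\qui{\mathbb{J}}$ as a real diagonal invertible matrix. Once (i) is obtained, one could of course deduce (ii) also by combining it with the palindromic structure of the characteristic polynomial promised in the introduction, but the similarity argument above is more direct and does not rely on that structure.
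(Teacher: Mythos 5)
Your argument is correct and follows essentially the same route as the paper: apply $\chi$ to $P^{\ast}\mathbb{J}P=\mathbb{J}$, take determinants together with $\det\qui{P}\geqq0$ for (i), and use the resulting similarity between $\qui{P}$ and $\qui{P}^{\ast\,-1}$ (equivalently, between $\qui{P}^{-1}$ and $\qui{P}^{\ast}$) for (ii). The only difference is that you spell out the spectral bookkeeping that the paper leaves implicit.
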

\begin{proof}
First, we apply $\chi$ to Equation (\ref{eq3}). For (i), use Proposition \ref{prop1}. For (ii), we have that $\qui{P}$ and $\qui{P}^{\ast~-1}$ are similar matrices since $\qui{\mathbb{J}}^2=I$.
\end{proof}
As a direct consequence of Proposition \ref{prop3} and Lemma \ref{lema1} we get the following
\begin{corollary}\label{cor1}
Given $P\in U(1,1;\Q)$ let $\lambda_1$ and $\lambda_2$ be the substantial right eigenvalues of $P$. We have one and only one of the following possibilities
\begin{enumerate}[(i)]
\item $\lambda_1=\lambda_2=\pm1$;
\item $\lambda_1=1$ and $\lambda_2=-1$;
\item $\lambda_1=r$ and $\lambda_2=\frac{1}{r}$, for some $r\in\R$, $r\neq-1,0,1$;
\item $\lambda_1=\pm1$ and $\lambda_2=\e^{i\theta}$ for some $\theta\in]0,\pi[$;
\item $\lambda_1=\lambda_2=\e^{i\theta}$ for some $\theta\in]0,\pi[$;
\item $\lambda_1=\e^{i\theta_1}$ and $\lambda_2=\e^{i\theta_2}$ for some $\theta_1$, $\theta_2\in]0,\pi[$, $\theta_1\neq\theta_2$;
\item $\lambda_1=r\e^{i\theta}$ and $\lambda_2=\frac{1}{r}e^{i\theta}$ for some $r>0$, $r\neq1$ and $\theta\in]0,\pi[$.
\end{enumerate}
\end{corollary}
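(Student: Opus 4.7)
The plan is to combine Proposition \ref{prop3} and Lemma \ref{lema1} in the most direct way: Proposition \ref{prop3} gives the factorization of the characteristic polynomial of $\qui{P}$, so the multiset of eigenvalues of $\qui{P}$ is exactly $\{\lambda_1,\bar{\lambda_1},\lambda_2,\bar{\lambda_2}\}$; Lemma \ref{lema1}(ii) says this multiset is stable under the involution $\sigma\colon\lambda\mapsto\bar{\lambda}^{-1}$; and Lemma \ref{lema1}(i) together with Proposition \ref{prop1}(2) gives $|\lambda_1|^2|\lambda_2|^2=1$. Listing the possible $\sigma$-orbit structures, subject to the normalization from Proposition \ref{prop3} (each $\lambda_\alpha$ is real or has positive imaginary part), will yield exactly the seven cases.

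Before doing cases, I would record one useful observation: if $\lambda=r\e^{i\phi}$ with $r>0$ and $\phi\in\,]0,\pi[$, then $\sigma(\lambda)=r^{-1}\e^{i\phi}$ has the same argument as $\lambda$, hence also positive imaginary part; and if $\lambda\in\R\setminus\{0\}$ then $\sigma(\lambda)=\lambda^{-1}\in\R$. Thus $\sigma$ preserves the subset of ``normalized'' representatives, which means I can analyze its action directly on the pair $(\lambda_1,\lambda_2)$ rather than on the four-element multiset. In particular, $\sigma(\lambda_1)\in\{\lambda_1,\lambda_2\}$, and similarly for $\lambda_2$.

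Now the case analysis splits by how many of $\lambda_1,\lambda_2$ are real. If both are real, then either each is fixed by $x\mapsto x^{-1}$, forcing $\lambda_\alpha\in\{\pm1\}$ and giving (i) when $\lambda_1=\lambda_2$ and (ii) otherwise; or $\lambda_2=\lambda_1^{-1}$ with $\lambda_1\neq\pm1$, giving (iii). If exactly one (say $\lambda_1$) is real, then $\sigma(\lambda_1)=\lambda_1^{-1}$ is real so cannot equal $\lambda_2$ or $\bar{\lambda_2}$; hence $\lambda_1=\lambda_1^{-1}$, i.e.\ $\lambda_1=\pm1$. Applied to $\lambda_2$, the same argument rules out $\sigma(\lambda_2)\in\R$, so $\sigma(\lambda_2)=\lambda_2$, whence $|\lambda_2|=1$; this is (iv). If neither is real, then either $\sigma$ fixes both (so each lies on the unit circle, giving (v) when $\lambda_1=\lambda_2$ and (vi) otherwise), or $\sigma$ swaps them, i.e.\ $\lambda_2=\bar{\lambda_1}^{-1}$; writing $\lambda_1=r\e^{i\theta}$ with $r>0$ and $\theta\in\,]0,\pi[$ yields $\lambda_2=r^{-1}\e^{i\theta}$, and $\lambda_1\neq\lambda_2$ forces $r\neq1$, which is (vii).

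I expect no serious obstacle: the only delicate point is making sure the seven cases are \emph{mutually exclusive} (for instance, checking that (v) and (vii) really are disjoint, since one might worry about $r=1$ in (vii), which is precisely the boundary excluded by $r\neq1$), and that the list is \emph{exhaustive}, which follows because the above trichotomy on the number of non-real coordinates covers all possibilities and each branch has been completely resolved.
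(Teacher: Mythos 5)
Your argument is correct and is exactly the route the paper intends: the paper derives Corollary \ref{cor1} as a direct consequence of Proposition \ref{prop3} and Lemma \ref{lema1}, and your case analysis via the involution $\lambda\mapsto\bar{\lambda}^{-1}$ acting on the normalized representatives simply spells out the details the paper leaves implicit. The key observation that this involution preserves the condition ``real or positive imaginary part,'' so that it restricts to an action on $\{\lambda_1,\lambda_2\}$, is the right way to make the exhaustiveness of the seven cases transparent.
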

\subsection{The action of $U(1,1;\Q)$ on quaternionic hyperbolic line}
The following two lemmas will be usefull to describe the isometries of $\HQ^1$ induced by $U(1,1;\Q)$.
\begin{lemma}\label{lema2}
Let $\lambda$, $\mu$ be right eigenvalues of $P\in U(1,1;\Q)$ and  let $X$, $Y$ be right eigenvectors of $P$ associated to $\lambda$ and $\mu$, respectively. Then
\begin{enumerate}[(i)]
\item If $|\lambda|\neq1$, then $\lp X,X\rp=0$ .
\item For $\lambda,\mu\in\C$, if $\bar{\lambda}\mu\neq1$ and $\lambda\mu\neq1$, then $\lp X, Y\rp=0$.
\end{enumerate}
\end{lemma}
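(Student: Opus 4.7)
The plan is to derive, in both parts, a single equation expressing the invariance of the Hermitian form under $P$. Since $P \in Sp(1,1)$ satisfies $P^{\ast}\mathbb{J}P = \mathbb{J}$, one has $\langle PX, PY\rangle = X^{\ast}P^{\ast}\mathbb{J}PY = X^{\ast}\mathbb{J}Y = \langle X, Y\rangle$. Substituting $PX = X\lambda$, $PY = Y\mu$ and using conjugate-linearity in the first argument together with right-linearity in the second, I obtain the quaternionic identity
\[
\bar{\lambda}\,\langle X, Y\rangle\,\mu \;=\; \langle X, Y\rangle.
\]
This single relation is the workhorse for both (i) and (ii).

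For (i), I would specialise to $Y = X$ and $\mu = \lambda$. Because the form is Hermitian, $\langle X, X\rangle$ is real and therefore commutes with the quaternion $\lambda$, so the identity collapses to $(|\lambda|^2 - 1)\langle X, X\rangle = 0$. The hypothesis $|\lambda|\neq 1$ immediately yields $\langle X, X\rangle = 0$.

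For (ii), the subtlety is that $\langle X, Y\rangle$ is a general quaternion even though the eigenvalues $\lambda, \mu$ are complex. I would write $\langle X, Y\rangle = z + wj$ with $z, w \in \mathbb{C}$ and use the commutation rule $j\mu = \bar{\mu}\,j$, valid for $\mu \in \mathbb{C}$, to expand
\[
\bar{\lambda}(z+wj)\mu \;=\; \bar{\lambda}\mu\,z \;+\; \bar{\lambda}\bar{\mu}\,w\,j.
\]
Comparing the $\mathbb{C}$-part and the $\mathbb{C}j$-part of the identity $\bar{\lambda}\langle X,Y\rangle\mu = \langle X, Y\rangle$ decouples it into two complex-scalar equations
\[
(\bar{\lambda}\mu - 1)\,z = 0, \qquad (\bar{\lambda}\bar{\mu} - 1)\,w = 0.
\]
Taking conjugates, the second is equivalent to $(\lambda\mu - 1)\bar{w} = 0$. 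The two hypotheses $\bar{\lambda}\mu\neq 1$ and $\lambda\mu\neq 1$ therefore force $z = w = 0$, i.e.\ $\langle X, Y\rangle = 0$.

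The argument involves no real obstacle; the only care required is purely bookkeeping, namely tracking how quaternionic scalars pass through the Hermitian form (conjugation on the left, none on the right) and, in (ii), cleanly separating the $\mathbb{C}$-component from the $\mathbb{C}j$-component so that the non-commutativity of $\mathbb{H}$ does not obscure the two independent linear conditions that emerge.
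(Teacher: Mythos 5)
Your proof is correct and follows essentially the same route as the paper: both derive the identity $\bar{\lambda}\lp X,Y\rp\mu=\lp X,Y\rp$ from invariance of the form, specialise $X=Y$, $\mu=\lambda$ for (i), and decompose $\lp X,Y\rp=z+wj$ with $z,w\in\C$ to split (ii) into the two complex conditions $(\bar{\lambda}\mu-1)z=0$ and $(\overline{\lambda\mu}-1)w=0$. Your write-up merely makes explicit the bookkeeping (reality of $\lp X,X\rp$, the rule $j\mu=\bar{\mu}j$) that the paper leaves implicit.
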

\begin{proof}
From Equation (\ref{eq1}), we have that $\lp X,Y\rp=\lp PX,PY\rp=\lp X\lambda,Y\mu\rp=\bar{\lambda}\lp X,Y\rp\mu$. Making $X=Y$ and $\lambda=\mu$ we get (i). Now, taking $a$, $b\in\C$ such that $\lp X,Y\rp=a+bj$, we get $a=\bar{\lambda}\mu a$, $b=\overline{\lambda\mu}b$ and (ii) follows.
\end{proof}
\begin{lemma}[\cite{chen-greenberg}, Prop. 2.1.4]\label{lema3}
If $X\in\Q^{1,1}$ then
\begin{enumerate}[(i)]
\item $X\in V_-$ if and only if $X^{\perp}\subset V_+$.
\item $X\in V_+$ if and only if $X^{\perp}\subset V_-$.
\item $X\in V_0$ if and only if $X^{\perp}=X\Q$.
\end{enumerate}
\end{lemma}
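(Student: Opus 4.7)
The plan is to use that $\Q^{1,1}$ is two-dimensional as a right $\Q$-vector space and that the form $\lp\cdot,\cdot\rp$ is non-degenerate of signature $(1,1)$. For any non-zero $X$, the map $Y\mapsto\lp X,Y\rp$ is a non-zero right $\Q$-linear functional on $\Q^{1,1}$, so its kernel $X^\perp$ is a $1$-dimensional right $\Q$-subspace. I will prove the three forward implications case by case, and then deduce the converses from the trichotomy $\Q^{1,1}\setminus\{0\}=V_-\sqcup V_0\sqcup V_+$.

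For (iii), if $X\in V_0$ then $\lp X,X\rp=0$ gives $X\in X^\perp$; since $X\Q$ and $X^\perp$ are both $1$-dimensional, they coincide. For (i), if $X\in V_-$ then $X\notin X^\perp$, so $\Q^{1,1}=X\Q\oplus X^\perp$ is an orthogonal decomposition. Picking a generator $Y$ of $X^\perp$, I want to argue that $\lp Y,Y\rp>0$. By non-degeneracy $\lp Y,Y\rp\neq 0$, and if $\lp Y,Y\rp<0$ then the Gram matrix in the basis $\{X,Y\}$ would be diagonal with two negative entries, contradicting signature $(1,1)$. Hence $Y\in V_+$, and for any non-zero $\mu\in\Q$, since $\lp Y,Y\rp\in\R$ is central,
\[\lp Y\mu,Y\mu\rp=\bar\mu\lp Y,Y\rp\mu=|\mu|^2\lp Y,Y\rp>0,\]
so every non-zero vector of $X^\perp=Y\Q$ lies in $V_+$. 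The proof of (ii) is identical with signs reversed.

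For the converses, observe that the three right-hand conditions are mutually exclusive: ``$X^\perp\setminus\{0\}\subset V_+$'', ``$X^\perp\setminus\{0\}\subset V_-$'', and ``$X^\perp=X\Q$'' (which forces $X\in X^\perp$, hence $X^\perp\setminus\{0\}\subset V_0$) cannot hold simultaneously for a non-zero $X$. Combined with the trichotomy for $X$ itself and the three forward implications already proved, each converse is automatic: if $X^\perp\subset V_+$ and $X$ were in $V_+$ or $V_0$, the forward direction of (ii) or (iii) would contradict the hypothesis.

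I expect the only subtle point to be the signature argument for $\lp Y,Y\rp$; everything else is a direct consequence of non-degeneracy and centrality of real scalars. In dimension two, however, the signature statement reduces to the elementary observation about diagonal $2\times 2$ Gram matrices, so no appeal to Witt's theorem is needed.
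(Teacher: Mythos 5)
Your proof is correct. Note that the paper does not actually prove this lemma; it is quoted verbatim from Chen--Greenberg (Prop.\ 2.1.4), so there is no in-paper argument to compare against. Your self-contained argument is the natural one: in a two-dimensional right $\Q$-space with a non-degenerate Hermitian form, $X^{\perp}$ is the kernel of the non-zero right-linear functional $Y\mapsto\lp X,Y\rp$, hence one-dimensional, and the rest follows from the trichotomy plus the centrality of the real scalar $\lp Y,Y\rp$. Two small points are worth spelling out. First, the step ``by non-degeneracy $\lp Y,Y\rp\neq0$'' deserves one more line: if $Y$ generates $X^{\perp}$ and $X\notin X^{\perp}$, then $\{X,Y\}$ is a basis, so $\lp Y,Y\rp=0$ together with $\lp X,Y\rp=0$ would put $Y$ in the radical of the form, contradicting non-degeneracy -- that is the actual content, not merely ``non-degeneracy'' applied to the single vector $Y$. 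Second, the appeal to ``signature $(1,1)$'' is cleanest if phrased without Sylvester's law: a diagonal Gram matrix with two negative entries would make the form negative semidefinite, contradicting the existence of the positive vector $(1,0)^{T}$ for the standard form $\bar{x}_1y_1-\bar{x}_2y_2$; you essentially say this, and in dimension two nothing deeper is needed. With those two clarifications the argument is complete and, in my view, a worthwhile addition given that the paper leaves the lemma as an external citation.
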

\subsubsection{Elliptic Case}\label{elliptic}
Let $P\in U(1,1;\Q)$ as in Corollary \ref{cor1} (ii), (iv), (v) and (vi), where for case (v) we assume that $P$ is diagonalizable (see Propositon \ref{prop4}). For cases (ii), (iv) and (vi), Lemma \ref{lema2} (ii) implies that right eigenvectors of $P$ associated to $\lambda_1$ and $\lambda_2$ are orthogonal. So, from Lemma \ref{lema3} there exists a right orthonormal basis $\{e_1,e_2\}$ of $\Q^{1,1}$ with $Pe_1=e_1\lambda_1$ and $Pe_2=e_2\lambda_2$. Note that this is also satisfied by case (v). In fact, as $\lambda_1=\lambda_2=\lambda$ then $\lp e_1,e_2\rp\in\C$, since $\lp e_1,e_2\rp=\lp Pe_1,Pe_2\rp=\bar{\lambda}\lp e_1,e_2\rp\lambda$. So, $W=e_1\C\oplus e_2\C$ is a \emph{$\C$-hyperbolic subspace of $\Q^{1,1}$} (see \cite[Sec. 2.1]{chen-greenberg}) with $Pw=w\lambda$, for any $w\in W$. In particular, we can suppose that $\{e_1,e_2\}$ is an orthonormal basis of $W$, and hence an orthonormal right basis for $\Q^{1,1}$. 

So, if $X=e_1X_1+e_2X_2$, we get $\lp X, X\rp=|X_1|^2-|X_2|^2$ and $PX=e_1\lambda X_1+e_2\lambda X_2$. Thus, $x=X_1X_2^{-1}$ gives the ball model $|x|<1$ for $\HQ^1$ and the isometry induced by $P$ is an euclidean motion (see \cite{porteous}), namely 
\[x\mapsto\lambda_1x\lambda_2^{-1}=\begin{cases}
                                     -x,&\text{ if }\lambda_1=1,~\lambda_2=-1\\
                                     \pm x\e^{-i\theta},&\text{ if }\lambda_1=\pm1,~\lambda_2=\e^{i\theta}\\
                                     \e^{i\theta}x\e^{-i\theta},&\text{ if }\lambda_1=\lambda_2=\e^{i\theta}\\
                                     \e^{i\theta_1}x\e^{-i\theta_2},&\text{ if }\lambda_1=\e^{i\theta_1},~\lambda_2=\e^{i\theta_2}\end{cases}.\]
Moreover, if $x=u+vj$, $u$, $v\in\C$, then $\lambda_1x\lambda_2^{-1}=\e^{i\alpha}u+\e^{i\beta}vj$, where 
\[(\alpha,\beta)=\begin{cases}
                                     (\pi,\pi)\\
                                     (-\theta,\theta),~(\pi-\theta,\pi+\theta)\\
                                     (0,2\theta)\\
                                     (\theta_1-\theta_2,\theta_1+\theta_2)\end{cases}                                      
,\]
respectively. Note that, $v=0$ and $u=0$ are two totally geodesic surfaces \cite[section 2.5]{chen-greenberg} (in fact for case (v), $v=0$ corresponds to the projectivization of $W$) which are both isometric to the complex hyperbolic line $\HC^1$ and intersects orthogonally at a single point, namely at $x=0$. So, in all cases except (v) $P$ acts by two simultaneous rotations on $v=0$ and $u=0$ with unique fixed point $x=0$. In case (v),  $P$ induces an isometry whose restriction to surface $v=0$ is the identity and acts by a rotation on the surface $u=0$. 
\subsubsection{Parabolic Case}\label{parabolic}
Let $P\in U(1,1;\Q)$ as in Corollary \ref{cor1} (i) and (v) assuming that $P$ is not diagonalizable. Let $\lambda$ be the unique substantial eigenvalue of $P$. From Propositon \ref{prop4} we can take a right basis $\{e_1,e_2\}$ of $\Q^{1,1}$ such that $Pe_1=e_1\lambda$ and $Pe_2=e_1+e_2\lambda$. Then
\begin{equation}\label{eq4}
\begin{aligned}
\lp e_1,e_2\rp =& \bar{\lambda}\lp e_1,e_1\rp+\bar{\lambda}\lp e_1,e_2\rp\lambda\\
\lp e_2,e_2\rp =& \lp e_1,e_1\rp+2\re(\lp e_1,e_2\rp\lambda)+\lp e_2,e_2\rp.
\end{aligned}
\end{equation}
By first and second equations in (\ref{eq4}) we get, respectively, that $e_1\in V_0$ and hence $\re(\lp e_1,e_2\rp\lambda)=0$. Now, by setting $\alpha=\lp e_1,e_2\rp$, (\ref{eq4}) entails that 
\begin{equation*}
\lambda\alpha=\alpha\lambda\text{~ and ~}\re(\alpha\lambda)=0,
\end{equation*}
where $\alpha\neq0$, by Lemma \ref{lema3} (iii). So, setting $X=e_1X_1+e_2X_2$, we get 
\[\lp X, X\rp= r|X_2|^2+2\re(\overline{X_1}\alpha X_2)\text{~ and ~}PX=e_1(\lambda X_1+X_2)+e_2\lambda X_2,\]
with $r=\lp e_2,e_2\rp$. Then, by making $\xi=X_1X_2^{-1}$, we obtain the Siegel domain $r+\re(\bar{\alpha}\xi)<0$ for $\HQ^1$, and the isometry induced by $P$ is given by 
\[\xi\mapsto\lambda\xi\lambda^{-1}+\lambda^{-1}=\begin{cases}\xi\pm 1,\text{ if }\lambda=\pm1\\\e^{i\theta}\xi\e^{-i\theta}+\e^{-i\theta},\text{ if }\lambda=\e^{i\theta}\end{cases}.\]
In the first case, we have a Heisenberg translation. In second case, we have the composition of a Heisenberg translation  with a rotation (as in elliptic case (v)). In two cases, the isometry induced by $P$ fixes only $\infty$.
\subsubsection{Loxodromic Case}\label{loxodromic}
Let $P\in U(1,1;\Q)$ as in Corollary \ref{cor1} (iii), (vii). From Corollary \ref{newcor} and Lemma \ref{lema2} (i), there exits a right basis $\{e_1,e_2\}\subset V_0$ of $\Q^{1,1}$ with $Pe_1=e_1\lambda_1$ and $Pe_2=e_2\lambda_2$. By Lemma \ref{lema3} (iii), $\lp e_1,e_2\rp\neq0$, so by substituting $e_1$ with $e_1\overline{\lp e_1,e_2\rp}\,^{-1}$, if necessary, we can suppose that $\lp e_1,e_2\rp=1$. Now, by taking $X=e_1X_1+e_2X_2$, we get $\lp X, X\rp=2\re(\overline{X_1}X_2)$ and $PX=e_1\lambda_1X_1+e_2\lambda_2X_2$. Thus, if $\xi=X_1X_2^{-1}$, Siegel domain for $\HQ^1$ is given by $\re(\xi)<0$ and the isometry induced by $P$ is
\[\xi\mapsto\lambda_1\xi\lambda_2^{-1}=\begin{cases}r^2\xi,\text{ if }\lambda_1=r,~\lambda_2=\frac{1}{r}\\r^2e^{i\theta}\xi\e^{-i\theta},\text{ if }\lambda_1=r\e^{i\theta},~\lambda_2=\frac{1}{r}\e^{i\theta}\end{cases},\]
which preserves Riemannian geodesic with end points $0$ and $\infty$ and fixes only these two points. In the first case, we have an homothety. In the second case, we have composition of an homothety with a rotation (as in elliptic case (v)). 
\section{Proof of Theorem \ref{teoA}}\label{sec4}
From Equation (\ref{eq3}) we have that $P=\begin{bmatrix}a&b\\c&d\end{bmatrix}\in U(1,1;\Q)$ if and only if 
\begin{equation*}
|a|^2-|b|^2=1,\text{ } \bar{a}c-\bar{b}d=0~\text{ and } |c|^2-|d|^2=-1.
\end{equation*}
Now, consider the complex representation $\qui{P}$ of $P$. On the one hand, by a direct computation, we have that characteristic polynomial $f$ of $\qui{P}$ is
\begin{equation}\label{eq5}
f=t^4-2\tau t^3+\rho t^2-2\tau t+1,
\end{equation}
where $2\tau=\tr(\qui{P})=																																																																																2\re(a+d)$ and $\rho=2+|c-\overline{b}|^2+4\re(a)\re(d)$. On the other hand, from Proposition \ref{prop3}, we get
\begin{equation}\label{eq6}
f=(t-\lambda_1)(t-\overline{\lambda_1})(t-\lambda_2)(t-\overline{\lambda_2}),
\end{equation}
where $\lambda_1$ and $\lambda_2$ are the substantial right eigenvalues of $P$. So from equations (\ref{eq5}), (\ref{eq6}) and Lemma \ref{lema1} we have
\begin{equation}\label{eq7}
\tau=\re(\lambda_1+\lambda_2)~\text{ and }~\rho=|\lambda_1|^2+|\lambda_2|^2+4\re(\lambda_1)\re(\lambda_2).
\end{equation}
Thus, from Corollary \ref{cor1} one readily gets the following
\begin{corollary}\label{cor2}
For any $P\in U(1,1;\Q)$ let $\tau$ and $\rho$ be as in (\ref{eq7}). We have one, and only one, of the following possibilities:
\begin{enumerate}[(i)]
\item $\tau=\pm 2$ and $\rho=6$;
\item $\tau=0$ and $\rho=-2$;
\item $\tau=r+\frac{1}{r}$ and $\rho=r^2+\frac{1}{r^2}+4$; 
\item $\tau=\pm 1+\cos\theta$ and $\rho=2\pm 4\cos\theta$; 
\item $\tau=2\cos\theta$ and $\rho=2+4\cos^2\theta$;
\item $\tau=\cos\theta_1+\cos\theta_2$ and $\rho=2+4\cos\theta_1\cos\theta_2$;
\item $\tau=\left(r+\frac{1}{r}\right)\cos\theta$ and $\rho=r^2+\frac{1}{r^2}+4\cos^2\theta$.
\end{enumerate}
Where $r\in\R$ with $r\neq-1,0,1$ and $\theta$, $\theta_1$, $\theta_2\in]0,\pi[$ wiht $\theta_1\neq\theta_2$.
\end{corollary}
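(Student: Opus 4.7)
The plan is a direct case-by-case substitution. Corollary~\ref{cor1} has already partitioned the possibilities for the substantial right eigenvalues $\lambda_1,\lambda_2$ of $P\in Sp(1,1)$ into seven mutually exclusive classes; since $(\tau,\rho)$ is a function of $(\lambda_1,\lambda_2)$ via (\ref{eq7}), the \emph{one and only one} dichotomy of Corollary~\ref{cor2} is inherited from Corollary~\ref{cor1}, and nothing further is needed on that score.

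To execute the substitution uniformly, I would write each eigenvalue in the polar form $\lambda_\alpha=r_\alpha\e^{i\theta_\alpha}$ with $r_\alpha>0$ (taking $\theta_\alpha\in\{0,\pi\}$ for the real cases), so that $|\lambda_\alpha|^2=r_\alpha^2$ and $\re(\lambda_\alpha)=r_\alpha\cos\theta_\alpha$. Then (\ref{eq7}) rewrites as
\[\tau=r_1\cos\theta_1+r_2\cos\theta_2,\qquad \rho=r_1^2+r_2^2+4r_1r_2\cos\theta_1\cos\theta_2,\]
and each of the seven cases (i)--(vii) of Corollary~\ref{cor1} produces in one line the corresponding pair in Corollary~\ref{cor2}. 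For example, case (vii), with $\lambda_1=r\e^{i\theta}$ and $\lambda_2=\frac{1}{r}\e^{i\theta}$, immediately gives $\tau=\bigl(r+\frac{1}{r}\bigr)\cos\theta$ and $\rho=r^2+\frac{1}{r^2}+4\cos^2\theta$; the remaining cases are analogous.

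There is no real obstacle here, as the statement is essentially a tabulation. The only sign bookkeeping to watch is in cases (i) and (iv), where the $\pm$ choice for $\lambda_1$ propagates into the formulas for $\tau$ and $\rho$: both signs survive in $\tau=\pm 2$ and $\tau=\pm1+\cos\theta$ (and in $\rho=2\pm4\cos\theta$), while the two sign choices in case (i) collapse to the single value $\rho=6$ because the product $\re(\lambda_1)\re(\lambda_2)=(\pm1)(\pm1)$ is $+1$ in either subcase.
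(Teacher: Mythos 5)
Your proposal is correct and matches the paper's (implicit) argument: the paper simply states that Corollary \ref{cor2} "readily" follows from Corollary \ref{cor1} together with (\ref{eq7}), i.e.\ exactly the case-by-case substitution you carry out, with exclusivity inherited from the partition in Corollary \ref{cor1}. Your polar-form bookkeeping and the remark about the sign collapse in case (i) are accurate.
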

Now, recall that to know if a polynomial $f$ has a repeated root it is enough to know if $f$ and its derivative $f'$ have a common root. For this, we can compute the \emph{resultant} of $f$ and $f'$ \cite[Lemma 3.3]{kirwan}. 
\begin{lemma}
For any $P\in U(1,1;\Q)$, characteristic polynomial $f$ of $\qui{P}$ (see Equation (\ref{eq5})) has a repeated root if and only if
\[\left|\begin{array}{rrrrrrr}1&-2\tau&\rho&-2\tau&1&0&0\\0&1&-2\tau&\rho&-2\tau&1&0\\0&0&1&-2\tau&\rho&-2\tau&1\\4&-6\tau&2\rho&-2\tau&0&0&0\\0&4&-6\tau&2\rho&-2\tau&0&0\\0&0&4&-6\tau&2\rho&-2\tau&0\\0&0&0&4&-6\tau&2\rho&-2\tau\end{array}\right|=0,\]
\end{lemma}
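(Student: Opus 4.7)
The plan is to recognize the displayed $7\times 7$ determinant as the resultant of $f$ with its derivative $f'$, arranged as a Sylvester matrix, and then to invoke the standard fact that a polynomial has a repeated root precisely when it shares a root with its derivative.

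First I would compute $f'$ explicitly from Equation (\ref{eq5}):
\[f'=4t^3-6\tau t^2+2\rho t-2\tau.\]
Thus the coefficient vector of $f$ is $(1,-2\tau,\rho,-2\tau,1)$ and that of $f'$ is $(4,-6\tau,2\rho,-2\tau)$.

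Next I would recall the Sylvester construction. For polynomials of degrees $n=4$ and $m=3$, the Sylvester matrix is the $(n+m)\times(n+m)=7\times 7$ matrix whose first $m=3$ rows are successive right-shifts of the coefficients of $f$ (padded with zeros to length $7$) and whose remaining $n=4$ rows are successive right-shifts of the coefficients of $f'$. Inspecting the displayed determinant row by row, one sees it is precisely this Sylvester matrix; hence its value equals $\mathrm{Res}(f,f')$.

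Finally, I would apply the cited result \cite[Lemma 3.3]{kirwan} (or equivalently the classical fact that $\mathrm{Res}(f,f')=0$ iff $f$ and $f'$ have a non-constant common factor, iff $f$ possesses a repeated root). Since $f$ is monic of degree $4$, it has no root at infinity and the leading coefficient of $f'$ is $4\neq 0$, so the resultant is nondegenerate and the equivalence applies directly. The conclusion of the lemma follows.

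There is no genuine obstacle here: the proof is a matching exercise between the displayed entries and the standard Sylvester template, followed by citation. The only thing to be slightly careful about is the ordering of rows (the $f$-block before the $f'$-block) and the bookkeeping of zero paddings on the right versus on the left, both of which agree with the displayed matrix.
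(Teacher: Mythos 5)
Your proof is correct and follows exactly the route the paper intends: the displayed $7\times 7$ determinant is the Sylvester resultant $\mathrm{Res}(f,f')$ built from the coefficient rows of $f$ and $f'=4t^3-6\tau t^2+2\rho t-2\tau$, and the repeated-root criterion is the standard fact cited as \cite[Lemma 3.3]{kirwan}. Your added remark that the equivalence is clean because $f$ is monic (so no degeneracy in the resultant) is a sensible precaution the paper leaves implicit.
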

i.e. 
\[(\rho+4\tau+2)(\rho-4\tau+2)(\rho-\tau^2-2)^2=0.\]
Note that from Corollary \ref{cor1}, $(\rho+4\tau+2)(\rho-4\tau+2)(\rho-\tau^2-2)^2\neq 0$ only in cases (vi) and (vii). It follows that the factors $\rho\pm 																																																																																																																																																																																																																																																																																	4\tau+2$, and $\rho-\tau^2-2$ give important discriminants, which are our main tool to find the class of P. In fact, in the $(\tau,\rho)$-plane consider the curves $\rho=4|\tau|-2$ and $\rho=\tau^2+2$. These curves determine the regions $R_1$ and $R_2$ as in Theorem \ref{teoA}. So, from Corollary \ref{cor2} we get
\begin{corollary}\label{cor3}
Seven cases in Corollary \ref{cor1} are distributed as follows
\begin{enumerate}[(a)]
\item For cases (ii), (iv) and (vi), one has that $(\tau,\rho)\in R_1-R_1\cap R_2$.
\item For cases (i) and (v), one has that $(\tau,\rho)\in R_1\cap R_2$.
\item For cases (iii) and (vii), one has that $(\tau,\rho)\in R_2- R_1\cap R_2$.
\end{enumerate}
\end{corollary}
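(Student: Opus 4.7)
The approach is purely computational. For each of the seven cases listed in Corollary~\ref{cor2} the values of $\tau$ and $\rho$ are given explicitly in terms of a few parameters ($r$, $\theta$, $\theta_1$, $\theta_2$); one need only substitute these expressions into the defining inequalities of $R_1$ and $R_2$ and read off the region. The quantities that matter are
\[
\Delta_1 = \rho - (\tau^2+2) \quad\text{and}\quad \Delta_2 = \rho - (4|\tau|-2),
\]
whose zero sets are precisely the boundary curves. Membership in $R_1\setminus R_1\cap R_2$, in $R_1\cap R_2$, or in $R_2\setminus R_1\cap R_2$ corresponds to $\Delta_1<0$, $\Delta_1=0$, or $\Delta_1>0$ respectively, together with $\Delta_2\geqq 0$ in the first two cases. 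Whenever $\Delta_1=0$ one automatically has $\Delta_2=(|\tau|-2)^2\geqq 0$, so the only genuine verifications of $\Delta_2\geqq 0$ occur when $\Delta_1<0$.

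To organize the work I would split according to whether $f$ has a repeated root. In cases (i)--(v) either some substantial right eigenvalue is real (and so equals its conjugate) or $\lambda_1=\lambda_2$; in every such instance $f$ has a repeated root, so by the preceding lemma the resultant $(\rho+4\tau+2)(\rho-4\tau+2)(\rho-\tau^2-2)^2$ vanishes. This forces at least one of the factors $\rho\pm 4\tau-2$ or $\rho-\tau^2-2$ to be zero, placing $(\tau,\rho)$ on a boundary curve; which one is settled by direct substitution. In cases (vi) and (vii) the four roots of $f$ are distinct, the resultant is nonzero, and strict inequalities must be proved.

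The key algebraic identities are the factorizations
\[
-\Delta_1 = (\cos\theta_1-\cos\theta_2)^2 \text{ in case (vi)}, \qquad \Delta_1 = (r-1/r)^2\sin^2\theta \text{ in case (vii)},
\]
together with boundary identities such as $\Delta_1 = -(1\mp\cos\theta)^2$ in case (iv) and the directly verified equalities in the remaining cases. The parameter restrictions $\theta,\theta_1,\theta_2\in\,]0,\pi[\,$ with $\theta_1\neq\theta_2$ and $r\neq 0,\pm 1$ yield the required strict signs at once. The only real obstacle is spotting the clean factorizations above, since in their expanded trigonometric form the signs are not transparent; a minor subtlety is tracking the piecewise nature of $|\tau|$ in case (iv), and the bound $\Delta_2>0$ in case (vi) reduces to the estimate $1+xy>|x+y|$ for $x,y\in\,]-1,1[\,$, which in turn follows from $(1-x^2)(1-y^2)>0$.
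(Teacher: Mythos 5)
Your dictionary (membership in $R_1\setminus R_1\cap R_2$, in $R_1\cap R_2$, or in $R_2\setminus R_1\cap R_2$ corresponds to $\Delta_1<0$, $\Delta_1=0$, $\Delta_1>0$, with $\Delta_2\geqq0$ needing a check only when $\Delta_1<0$) is correct for the regions as literally defined, and your computations for cases (i), (ii), (iv), (v), (vi), (vii) are right; they supply exactly the details the paper's one-line ``from Corollary~\ref{cor2} we get'' leaves to the reader. But the argument breaks down at case (iii), which you fold into the blanket phrase ``which one is settled by direct substitution'' without performing the substitution. There $\tau=r+\frac{1}{r}$ and $\rho=r^2+\frac{1}{r^2}+4=\left(r+\frac{1}{r}\right)^2+2=\tau^2+2$, so $\Delta_1=0$: case (iii) lands on the parabola and hence, by your own dictionary, in $R_1\cap R_2$ --- not in $R_2\setminus R_1\cap R_2$ as part (c) asserts. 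Carried out honestly, your method refutes rather than proves part (c) for case (iii). (Your claim that in cases (vi) and (vii) ``the resultant is nonzero'' is consistent with the paper, but it is false that the resultant is nonzero exactly off the boundary curves you need, since case (iii) has a repeated root yet is supposed to land in the open region $R_2\setminus R_1\cap R_2$; this should have been a warning sign.)

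The discrepancy is not entirely your fault: with $R_1$ read literally as $\{4|\tau|-2\leqq\rho\leqq\tau^2+2\}$ (an unbounded set, since $4|\tau|-2\leqq\tau^2+2$ holds for every $\tau$), the whole parabola $\rho=\tau^2+2$ lies in $R_1\cap R_2$ and Corollary~\ref{cor3}(c) is false as stated. What distinguishes case (iii) from cases (i) and (v), all three of which sit on the parabola, is not the sign of $\Delta_1$ but the size of $\tau$: cases (i) and (v) have $|\tau|=2|\cos\theta|\leqq2$, while case (iii) has $|\tau|=\left|r+\frac{1}{r}\right|>2$. The statement becomes true only if $R_1$ carries the additional constraint $|\tau|\leqq2$, i.e.\ if $R_1$ is the bounded lens between the two curves (tangent at $(\pm2,6)$), which is also precisely the locus where all four roots of $f$ lie on the unit circle. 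A complete proof must either record this restriction on $R_1$ or verify $|\tau|\leqq2$ versus $|\tau|>2$ case by case; your write-up does neither, and that omission is exactly where the content of the corollary is concentrated.
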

\begin{remark}\label{rk1}
$P\in U(1,1;\Q)$ is called \emph{simple} if $P$ is similar in $U(1,1;\Q)$ to a complex matrix. When $P$ is not simple, we say that $P$ is \emph{compound}. One has that \cite[Prop. 1.2]{cao-parker-wang} $P=\begin{bmatrix}a&b\\c&d\end{bmatrix}$ is simple if and only if
$$\re(a)=\re(d)~~~c=\bar{b}.$$ Now, note that $\rho=\tau^2+2$ (resp. $\rho>\tau^2+2$, $\rho<\tau^2+2$) if and only if $|\bar{b}-c|^2=\re(a-d)^2$ (resp. $|\bar{b}-c|^2>\re(a-d)^2$, $|\bar{b}-c|^2<\re(a-d)^2$). So, simple elements of $U(1,1;\Q)$ appear only along the parabola $\rho=\tau^2+2$. However, taking $d=-\bar{a}$ and $c=-b$ with $b=\re(a)\neq0$ and $|\im(a)|=1$, we get that $P$ is compound and parabolic, with $(\tau,\rho)=(0,2)$ being the vertex of the parabola. 
\end{remark}
\begin{proof}[Proof of Theorem \ref{teoA}]
According to the discussion in sections  \ref{elliptic}, \ref{parabolic} and \ref{loxodromic}, cases (a) and (c) in Corollary \ref{cor3} correspond, respectively, to elliptic and loxodromic isometries. While in case (b) we have both elliptic and parabolic isometries. So, to finish the proof, it is enough to analize when matrix $P$ is diagonalizable in case (b). This case correspond to $\lambda_1=\lambda_2=\lambda$ and $|\lambda|=1$.
On the one hand, if $\lambda=\pm 1$, $P$ is diagonalizable if and only if $P=\pm I$. On the other hand if $\lambda\not\in\R$ and $|\lambda|=1$, from Proposition \ref{prop4}, we get that $P$ is diagonalizable if and only if minimal polynomial of $\qui{P}$ is 
\[(t-\lambda)(t-\bar{\lambda})=t^2-2\re(\lambda)t+1.\]
In this case,
\begin{equation*}
\qui{P}^2-2\re(\lambda)\qui{P}+I=0.
\end{equation*}
From Proposition \ref{prop1}, the last equation means $P^2-2\re(\lambda)P+I=0$, or, equivalently
\[P+P^{-1}=2\re(\lambda)I.\]
But, if $P=\begin{bmatrix}a&b\\c&d\end{bmatrix}$, then $P^{-1}=\mathbb{J}P^{\ast}\mathbb{J}=\left[\begin{array}{rr}\bar{a}&-\bar{c}\\-\bar{b}&\bar{d}\end{array}\right]$ from Equation (\ref{eq3}). Therefore, $P$ is diagonalizable if and only if $\re(a)=\re(d)=\re(\lambda)$ and $c=\bar{b}$.  
\end{proof}
Finally, as a byproduct one has explicit formulas for the substantial eigenvalues in terms of $\tau$ and $\rho$.
\begin{theorem}\label{teoB}
Substancial right eigenvalues of $P\in U(1,1;\Q)$ are given by 
\begin{enumerate}
\renewcommand{\labelenumi}{(\alph{enumi})}
\item $\lambda_1=\mathrm{e}^{i\arccos\frac{\tau+\sqrt{\tau^2+2-\rho~}}{2}}$, $\lambda_2=\mathrm{e}^{i\arccos\frac{\tau-\sqrt{\tau^2+2-\rho~}}{2}}$, if $(\tau,\rho)\in R_1$;
\item $\lambda_1=r\mathrm{e}^{i\theta}$,  $\lambda_2=\frac{1}{r}\mathrm{e}^{i\theta}$, where $\theta=\arccos\sqrt{\frac{\rho+2-\sqrt{(\rho+2)^2-16\tau^2~}}{8}~}$ and $r=\frac{\tau+\sqrt{\tau^2-4\cos^2\theta~}}{2\cos\theta}$, if $(\tau,\rho)\in R_2-R_1\cap R_2$.
\end{enumerate}
\end{theorem}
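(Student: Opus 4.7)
The plan is to invert the relations~(\ref{eq7}) on each region, using Corollary~\ref{cor3} to identify which parametric form of $\lambda_1,\lambda_2$ from Corollary~\ref{cor1} is active there. In both regions the unknowns enter only through symmetric combinations, so Vieta's formulas reduce the problem to solving a single quadratic.

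For part~(a), $(\tau,\rho)\in R_1$ forces us into one of the cases (i), (ii), (iv), (v), (vi) of Corollary~\ref{cor1}, where $|\lambda_1|=|\lambda_2|=1$. Writing $\lambda_\alpha=\e^{i\theta_\alpha}$ and substituting into~(\ref{eq7}) yields $\cos\theta_1+\cos\theta_2=\tau$ and $\cos\theta_1\cos\theta_2=(\rho-2)/4$, so $\cos\theta_1$ and $\cos\theta_2$ are the two roots of $x^2-\tau x+(\rho-2)/4$. Applying $\arccos$ produces the stated expressions for $\lambda_1,\lambda_2$. The radicand $\tau^2+2-\rho$ is nonnegative precisely because $R_1$ is defined by $\rho\leqq\tau^2+2$.

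For part~(b), $(\tau,\rho)\in R_2\setminus R_1\cap R_2$ puts us in case (iii) or (vii) of Corollary~\ref{cor1}, where $\lambda_1=r\e^{i\theta}$ and $\lambda_2=\tfrac{1}{r}\e^{i\theta}$ (with $\theta=0$ in case~(iii)). Setting $s=r+\tfrac{1}{r}$ and $c=\cos\theta$, relation~(\ref{eq7}) becomes $sc=\tau$ and $s^2+4c^2=\rho+2$. Hence $s^2$ and $4c^2$ are the two roots of $y^2-(\rho+2)y+4\tau^2$, given by $\tfrac{(\rho+2)\pm\sqrt{(\rho+2)^2-16\tau^2}}{2}$. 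The key step is to decide which root equals $4c^2$: since $r\neq\pm 1$ forces $|s|\geqq 2$ whereas $|c|\leqq 1$, we have $4c^2\leqq s^2$, singling out $4c^2$ as the smaller root and giving the stated formula for $\theta$. The expression for $r$ then follows from the quadratic $r^2-(\tau/c)r+1=0$.

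The main technical obstacle is the bookkeeping on sign choices and nonnegativity of the radicands. On $R_2$ one has $\rho+2\geqq\tau^2+4\geqq 4|\tau|$ by AM--GM, which ensures $(\rho+2)^2-16\tau^2\geqq 0$; nonnegativity of $\tau^2-4c^2$ is equivalent to $4c^2\leqq s^2$, already used above. The boundary case $\rho=\tau^2+2$, which is $R_1\cap R_2$, is absorbed into part~(a): there the discriminant vanishes and the two formulas for $\cos\theta_\alpha$ coincide, consistently with the repeated-eigenvalue cases (i) and (v) of Corollary~\ref{cor1}.
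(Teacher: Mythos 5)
Your proposal is correct and follows essentially the same route as the paper: both split the seven cases of Corollary \ref{cor2} into the unit-modulus class on $R_1$ and the class $\lambda_1=r\e^{i\theta}$, $\lambda_2=\frac{1}{r}\e^{i\theta}$ on $R_2\setminus R_1\cap R_2$, and then invert the resulting symmetric relations for $(\theta_1,\theta_2)$ and $(r,\theta)$. The only difference is that the paper leaves the inversion as "readily follows," whereas you carry it out explicitly via Vieta's formulas and justify the nonnegativity of the radicands and the choice of the smaller root for $4\cos^2\theta$.
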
 
\begin{proof}
We can divide the seven cases in Corollary \ref{cor2} in two classes. Namely,
for (i), (ii), (iv), (v) and (vi) we have that $(\tau,\rho)\in R_1$ and
\begin{equation}\label{eq8}
\tau=\cos\theta_1+\cos\theta_2\text{ and }\rho=2+4\cos\theta_1\cos\theta_2.
\end{equation}
For (iii) and (vii) we get $(\tau,\rho)\in R_2-R_1\cap R_2$ and
\begin{equation}\label{eq9}
\tau=\left(r+\frac{1}{r}\right)\cos\theta\text{ and }\rho=r^2+\frac{1}{r^2}+4\cos\theta.
\end{equation} 
So, the claim readily follows from equations (\ref{eq8}) and (\ref{eq9}), by solving respectively  for $(\theta_1,\theta_2)$ and $(r,\theta)$.
\end{proof}


\begin{thebibliography}{9}
\bibitem{cao-parker-wang}
W. Cao and J. R. Parker and X. Wang.
On Classifications of Quaternionic M\"obius Transformations. \emph{Math. Proc. Camb. Phil. Soc.} \textbf{137}
(2004) 349--361.
\bibitem{chen-greenberg}
S. S. Chen and L. Greengerg.
Hyperbolic Spaces \emph{Contributions to Analysis. A Collection of Paper Dedicated to Limpan Bers}.
(Academic Press, New York, London, 1974) 49--87.
\bibitem{kirwan}
F. Kirwan.
\emph{Complex Algebraic Curves}.
(Cambridge University Press, New York, 1992).
\bibitem{lee}
H. C. Lee.
Eigenvalues and Canonical Forms of Matrices with Quaternion Coefficients. \emph{Proc. R.I.A.} Sect. A \textbf{52}
(1949) 253--260.
\bibitem{parker}
J. R. Parker.
\emph{Notes on Complex Hyperbolic Geometry}.
(Unpublished Notes, 2003).
\bibitem{porteous}
I. R. Porteous.
\emph{Topological Geometry}.
(Cambridge University Press, London, New York, New Rochelle, Melbourne, Sydney, 1969).
\bibitem{zhang}
F. Zhang.
Quaternions and Matrices of Quaternions. \emph{Linear Algebra Appl.} \textbf{251}
(1997) 21--57.
\end{thebibliography}
\end{document}